\documentclass[final,11pt]{elsarticle}

\usepackage{verbatim,a4wide}

\usepackage{amsmath}
\usepackage{amssymb}
\usepackage{amsthm}

\usepackage{algorithm}
\usepackage{algpseudocode}

\usepackage{xcolor}

\usepackage{tikz}
\usetikzlibrary{patterns}

\usepackage{hyperref}

\usepackage[cp1250]{inputenc}
\usepackage[OT4]{fontenc}
\usepackage[english]{babel}

\usepackage{datetime}

\numberwithin{equation}{section}
\numberwithin{algorithm}{section}
\numberwithin{figure}{section}
\numberwithin{table}{section}

\newtheorem{theorem}{Theorem}[section]
\newtheorem{lemma}[theorem]{Lemma}
\newtheorem{remark}[theorem]{Remark}
\newtheorem{corollary}[theorem]{Corollary}
\newtheorem{example}[theorem]{Example}

\newtheorem{assumption}[theorem]{Assumption}

\newcommand{\p}[1]{\mbox{\textsf{#1}}}
\newcommand{\floor}[1]{\left\lfloor{#1}\right\rfloor}
\newcommand{\ceil}[1]{\left\lceil{#1}\right\rceil}

\newcommand{\hyper}[5]{\,{}_{#1}F_{#2}\!\left(%
            \begin{array}{cc}{\displaystyle{#3}}\\[0.25ex]%
            {\displaystyle{#4}} \end{array}\bigg|\,{\displaystyle{#5}}%
            \right)}

\begin{document}

\begin{frontmatter}

\title{Evaluation of Gauss-Legendre curves}

\author[A1]{Filip Chudy}
\ead{fch@cs.uni.wroc.pl}

\author[A1]{Pawe{\l} Wo\'{z}ny\corref{cor}}
\ead{pwo@cs.uni.wroc.pl}

\cortext[cor]{Corresponding author.}

\address[A1]{Faculty of Mathematics and Computer Science, University of 
             Wroc{\l}aw, ul.~Joliot-Curie 15, 50-383 Wroc{\l}aw, Poland}

\begin{abstract}
We present new representations of Gauss--Legendre polynomials and their 
derivatives in the shifted power basis and in bases related to symmetric 
orthogonal Jacobi polynomials. Using these representations and certain 
recurrence relations, we propose efficient $O(n^2+dn)$ methods for evaluating 
a Gauss--Legendre curve of degree $n$ in $\mathbb E^d$. We also propose 
algorithms for multipoint evaluation with computational complexity 
$O(Mdn+dn^2)$, where $M$ is the number of evaluation points.
\end{abstract}

%
%
%
%

\begin{keyword}
symmetric Jacobi orthogonal polynomials, Gauss-Legendre polynomials, 
parametric curves, CAGD, fast evaluation, recurrence relations 
\end{keyword}

\end{frontmatter}

\section{Introduction}                                  \label{S:Introduction}

The \textit{Gauss--Legendre curves} (\textit{GL curves} for short), recently 
introduced in~\cite{Moon2023} (see also~\cite{Moon2026}), are polynomial 
parametric curves closely related to Gauss--Legendre quadratures and Legendre 
orthogonal polynomials (see, e.g., \cite{WG}, \cite{KLS2010}). They have 
recently started gaining traction in CAGD, partly due to their very good shape 
control properties. Unlike B\'{e}zier curves (see, e.g., \cite{Agoston2005}), 
GL curves closely follow the control polygon, even at high degrees. 
Therefore, they may have many practical applications, for example in numerical 
analysis and computer graphics.

However, the so-called \textit{Gauss--Legendre poly nomials} 
(\textit{GL polynomials} for short), which form the basis of GL curves, are 
defined in a rather complicated manner, and their efficient evaluation is 
nontrivial. Note that the evaluation of points on GL curves is not discussed 
in~\cite{Moon2023}. 

The article is organized as follows. In Section~\ref{S:GL-curves}, we briefly 
describe the GL polynomials and GL curves and recall their main properties. 
The new representations of the GL polynomials and their derivatives are presented
in Section~\ref{S:NewRep}, where we consider expansions with respect to the 
shifted power basis and some orthogonal bases related to symmetric Jacobi 
polynomials (cf.~\S\ref{S:Jacobi}). Then, in Section~\ref{S:Evaluation}, using 
these representations, we propose fast and numerically stable algorithms for 
evaluating GL curves. Their computational complexity is $O(n^2+dn)$ when 
computing a point on a GL curve of degree $n\in\mathbb N$ in $\mathbb E^d$ 
$(d\geq1)$. Moreover, we investigate the problem of multipoint evaluation of GL 
curves and propose algorithms that run in $O(Mdn+dn^2)$ time, where $M$ denotes 
the number of evaluation points. Usually, $M\gg n$ and $d$ is small, so the 
computational complexity is $O(Mdn)$. Results of the conducted numerical 
experiments are presented in~\S\ref{S:Tests}.

\section{Symmetric Jacobi polynomials}                        \label{S:Jacobi}

We now recall several well-known properties of orthogonal polynomials from the 
\textit{Jacobi family} (of which the Legendre polynomials are a member) that 
will be useful in the study presented in this paper. For details and proofs of 
the results given below, we refer, for example, to \cite{GR2007}, 
\cite{KLS2010}, \cite{STW2011}, and the references given therein.

The \textit{Jacobi polynomials} 
$P^{(\alpha,\beta)}_m\in\Pi_m\setminus\Pi_{m-1}$ $(m\in\mathbb N;\,
\alpha,\beta>-1)$ are orthogonal in the sense that 
$$
\int_{-1}^{1}(1-x)^\alpha (1+x)^\beta 
             P^{(\alpha,\beta)}_i(x)
             P^{(\alpha,\beta)}_j(x)\,\mbox{d}x=\delta_{ij}h_i,
$$
where $\delta_{ij}$ denotes the \textit{Kronecker delta}, $h_i>0$, and
$\Pi_\ell$ is the set of polynomials of degree $\leq \ell$ 
($\Pi_{-1}:=\emptyset$).

We use the standard normalization
$$
P_k^{(\alpha,\beta)}(1)=\binom{k+\alpha}{k}\qquad (k=0,1,\ldots)
$$
which uniquely defines the Jacobi polynomials.

In this paper, we focus on the case $\alpha=\beta$ (which corresponds to the 
so-called \textit{Gegenbauer polynomials}). Thus, for simplicity, we set
\begin{equation}\label{E:Def-JacobiSymm}
P^{(\alpha)}_k(x):=P^{(\alpha,\alpha)}_k(x)\qquad (k=0,1,\ldots;\, \alpha>-1).
\end{equation}

The polynomials $P^{(\alpha)}_k$ have the following explicit representation:
\begin{equation}\label{E:P_k-power}
\displaystyle
P^{(\alpha)}_k(x)=\frac{(\alpha+1)_k}{k!}
               \hyper{2}{1}{-k,\,k+2\alpha+1}
                           {\alpha+1}{\frac{1-x}{2}}\qquad (k=0,1,\ldots),
\end{equation}
where ${}_{p}F_{q}$ is the \textit{generalized hypergeometric function}, and 
$(a)_i$ denotes the \textit{Pochhammer symbol} (see, e.g., \cite{GR2007},
\cite{KLS2010}).

They fulfill a recurrence relation, namely
\begin{equation}\label{E:JacobiRecRel}
\displaystyle
P_{k}^{(\alpha)}(x)=x\,\xi^{(\alpha)}_1(k)P_{k-1}^{(\alpha)}(x)-
                               \xi^{(\alpha)}_2(k)P_{k-2}^{(\alpha)}(x)
                                                         \qquad (k=2,3,\ldots),
\end{equation}
where
$$
\xi^{(\alpha)}_1(k):=\frac{(k+\alpha)(2k+2\alpha-1)}{k(k+2\alpha)}
                                                                \quad(k\geq1),
\qquad
\xi^{(\alpha)}_2(k):=\frac{(k+\alpha-1)_2}{k(k+2\alpha)}\quad(k\geq2),
$$
and $P_{0}^{(\alpha)}(x)=1$, 
$P_{1}^{(\alpha)}(x)=\xi^{(\alpha)}_1(1)\,x=(\alpha+1)x$. Hence, for given
$x\in\mathbb R$, $m\in\mathbb N$ and $\alpha>-1$, $P^{(\alpha)}_m(x)$ can be 
evaluated in $O(m)$ time.

For $k\in\mathbb N$, we have the following symmetry:
\begin{equation}\label{E:P_k-symm}
P_k^{(\alpha)}(x)=(-1)^kP_k^{(\alpha)}(-x).
\end{equation}
The $i$th derivative of $P_k^{(\alpha)}$ is also a symmetric Jacobi polynomial, 
i.e.,
\begin{equation}\label{E:Jacobi-diff}
\frac{\mbox{d}^i}{\mbox{d}x^i}P_{k}^{(\alpha)}(x)=
                     \frac{(k+2\alpha+1)_i}{2^i}P_{k-i}^{(\alpha+i)}(x),
\end{equation}
where $k,i\in\mathbb N$ and we adopt the convention---used throughout the 
sequel---that $P_\ell^{(\alpha)}\equiv 0$ for $\ell=-1,-2,\ldots$.

Let us consider the polynomial $u_m\in\Pi_m$ $(m\in\mathbb N)$ given in the 
\textit{symmetric Jacobi form},
$$
u_m(x)=\sum_{k=0}^{m}v_k P_k^{(\alpha)}(x),
$$
where the coefficients $v_0,v_1,\ldots,v_m\in\mathbb R$ are known. Recall that, 
for a given $x\in\mathbb R$, the value $u_m(x)$ can be computed in $O(m)$ time 
using the famous \textit{Clenshaw algorithm} (see, e.g, \cite{WG}):
\begin{enumerate}\label{E:Clenshaw}
\itemsep 1ex

\item Set $B_{m+1}=B_{m+2}:=0$.

\item For $k=m,m-1,\ldots,0$, set
      $$
      B_k:=v_k+x\,\xi^{(\alpha)}_1(k+1)B_{k+1}-\xi^{(\alpha)}_2(k+2)B_{k+2}.
      $$

\end{enumerate}
Then $u_m(x)=B_0$.

Note that the Clenshaw algorithm follows from the recurrence 
relation~\eqref{E:JacobiRecRel}.

\subsection{Legendre polynomials}                          \label{SS:Legendre}

We write $P_k(x):=P_k^{(0)}(x)$ for 
$k=0,1,\ldots$. Clearly, the polynomials $P_k$ are the well-known 
\textit{Legendre polynomials}. 

We will use the following classical Christoffel–Darboux formula for Legendre 
polynomials: 
\begin{equation}\label{E:Chris-Dar}
\displaystyle
\sum_{i=0}^{k}(2i+1)P_i(x)P_i(y)=
           (k+1)\frac{P_k(y)P_{k+1}(x)-P_k(x)P_{k+1}(y)}{x-y},
\end{equation}
where $x\neq y$, and $k\in\mathbb N$.

It is also true that the indefinite integral of the polynomial $P_k$ is given 
by
\begin{equation}\label{E:Int-P_k-1}
\int P_{k}(x)\,\mbox{d}x=\frac{1}{2k+1}\left(P_{k+1}(x)-P_{k-1}(x)\right)
+\mbox{const.},
\end{equation}
where $k\geq0$. Furthermore, it can be checked that
\begin{equation}\label{E:Int-P_k-2}
\int P_{k}(x)\,\mbox{d}x=
      \frac{1}{k(k+1)}(x^2-1)\frac{\mbox{d}}{\mbox{d}x}P_{k}(x)+\mbox{const.}\\
\end{equation}
for $k\geq1$.

\begin{assumption}\label{A:Assumption-1}
From now on, let $n\in\mathbb N$ be fixed. Let $\tau_i\equiv\tau^{(n)}_i$ 
$(i=1,2,\ldots,n;\ n>0)$,
\begin{equation}\label{E:Def-tau_i}
-1<\tau_1<\tau_2<\cdots<\tau_n<1,
\end{equation}
be the \textit{zeros} of the $n$th Legendre polynomial.

In this paper, we assume that the zeros $\tau^{(n)}_i$ $(i=1,2,\ldots,n)$ are
known with high numerical accuracy, since they can be precomputed using, for 
example, the results presented in~\cite{HT2013}, \cite{JM2018}. 
\end{assumption}

\section{Gauss-Legendre polynomials and curves}            \label{S:GL-curves}

The GL polynomials and the GL curves were recently introduced 
in~\cite{Moon2023}. We now recall their definitions, as well as some of their 
main properties proved in that paper.

The GL polynomials $F^n_i\in\Pi_{n}\setminus\Pi_{n-1}$ $(i=0,1,\ldots,n)$ are 
given by
\begin{equation}\label{E:Def-F^n_i}
F^n_i(t):=G^n_i(t)-G^n_{i+1}(t),
\end{equation}
where $G^n_0(t)=-G^n_{n+1}(t)\equiv\frac{1}{2}$, and
\begin{equation}\label{E:Def-G^n_i}
G^n_i(t):=\frac{nP_{n-1}(\tau_i)}{2}
               \int_{-1}^{t}\frac{P_n(x)}{x-\tau_i}\,\mbox{d}x-\frac{1}{2},
\end{equation}
for $i=1,2,\ldots,n$.

Polynomials $F^n_0, F^n_1,\ldots, F^n_n$ form a basis of $\Pi_n$. They also 
give the \textit{partition of unity}, i.e.,
$$
\sum_{i=0}^{n}F^n_i(t)\equiv 1\qquad(t\in\mathbb R).
$$
We have
\begin{equation}\label{E:F^n_i-delta}
F^n_i(-1)=\delta_{0i},\qquad F^n_i(1)=\delta_{ni}\qquad (i=0,1,\ldots,n).
\end{equation}
Moreover, the following symmetries hold:
\begin{equation}\label{E:F-symm}
F^n_i(-t)=-F^{n}_{n-i}(t)\quad (0\leq i\leq n),
\end{equation}
because $G^n_i(-t)=-G^{n}_{n-i+1}(t)$ for $0\leq i\leq n+1$.

The GL curve $\p{P}_n:[-1,1]\to\mathbb E^d$ $(d\geq1)$ of \textit{degree} $n$ 
is a parametric polynomial curve of the form
\begin{equation}\label{E:Def-GL-curve}
\p{P}_n(t):=\sum_{i=0}^{n}\p{W}_iF^n_i(t),
\end{equation}
where $\p{W}_0,\p{W}_1,\ldots,\p{W}_n\in\mathbb E^d$ are the 
so-called \textit{control points}. 

Note that $\p{P}_n(-1)=\p{W}_0$, $\p{P}_n(1)=\p{W}_n$ 
(cf.~\eqref{E:F^n_i-delta}). Although the GL curve does not lie in the convex 
hull of its control points, its graph closely follows its control polygon. This 
property has, in recent years, contributed to the growing popularity of these 
curves.

\section{New representations for GL polynomials}              \label{S:NewRep}

As mentioned in~\S\ref{S:Introduction}, the problem of evaluating GL curves was 
not considered in~\cite{Moon2023}. The purpose of this article is therefore to 
propose fast and numerically stable methods for computing points on a GL 
curve~\eqref{E:Def-GL-curve}.

In view of Eq.~\eqref{E:Def-F^n_i}, the main problem is to efficiently evaluate 
$G^n_i(t)$ for a given $t\in[-1,1]$ and for all $i=1,2,\ldots,n$ 
(cf.~\eqref{E:Def-G^n_i}). To this end, we study the representations of 
polynomials $G^n_i$ in \textit{(i)} shifted power basis, and \textit{(ii)}
in symmetric Jacobi basis.

\subsection{Shifted power basis}                         \label{SS:PowerBasis}

Using~\eqref{E:P_k-symm} and~\eqref{E:P_k-power}, we obtain
\begin{equation}\label{E:P_n-power}
P_n(x)=\sum_{k=0}^na_k(x+1)^k,
\end{equation}
where
$$
a_{k}:=\frac{(-1)^{n-k}}{2^k}\binom{n}{k}\binom{n+k}{k}\qquad (0\leq k\leq n).
$$
Observe that
\begin{equation}\label{E:a-RecRel}
2(k+1)^2a_{k+1}+(n-k)(n+k+1)a_{k}=0,
\end{equation}
where $k=n-1,n-2,\ldots,0$, and 
$$
a_{n}=\frac{1}{2^n}\binom{2n}{n}.
$$

\begin{lemma}\label{L:RecRel-power}
Let the coefficients $b^{(i)}_k$ $(k=0,1,\ldots,n-1;\, 1\leq i\leq n)$ be such 
that
\begin{equation}\label{E:P_n/(x-tau_i)-power}
\frac{P_n(x)}{x-\tau_i}=\sum_{k=0}^{n-1}b^{(i)}_k(x+1)^k
\end{equation}
(cf.~\eqref{E:Def-G^n_i}).

The following non-homogeneous first-order recurrence relation holds:
\begin{equation}\label{E:b-RecRel-I}
b^{(i)}_{k-1}-(\tau_i+1)b^{(i)}_{k}=a_{k}\qquad (k=n,n-1,\ldots,1),
\end{equation}
where $b^{(i)}_{n}:=0$.

In addition, the coefficients $b^{(i)}_k$ satisfy the second-order homogeneous 
recurrence relation of the form
\begin{equation}\label{E:b-RecRel-II}
(n-k)(n+k+1)b^{(i)}_{k-1}+(2(k+1)^2-(\tau_i+1)(n-k)(n+k+1))b^{(i)}_{k}
                                           -2(\tau_i+1)(k+1)^2b^{(i)}_{k+1}=0,
\end{equation}
where $k=n-1,n-2,\ldots,1$, $b^{(i)}_{n-1}=a_{n}$, and $b^{(i)}_n:=0$.
\end{lemma}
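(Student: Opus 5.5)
We will prove the first-order relation~\eqref{E:b-RecRel-I} by multiplying out and comparing coefficients. Then we will derive~\eqref{E:b-RecRel-II} from it and from~\eqref{E:a-RecRel} by eliminating the $a_k$.

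For~\eqref{E:b-RecRel-I}, set $y:=x+1$ and $c:=\tau_i+1$, so that $x-\tau_i=y-c$. Multiplying~\eqref{E:P_n/(x-tau_i)-power} by $(y-c)$ and using~\eqref{E:P_n-power} gives
\[
\sum_{k=0}^{n}a_k y^k=(y-c)\sum_{k=0}^{n-1}b^{(i)}_k y^k
=\sum_{k=1}^{n}b^{(i)}_{k-1}y^k-c\sum_{k=0}^{n-1}b^{(i)}_k y^k .
\]
With the convention $b^{(i)}_n:=0$, comparing the coefficients of $y^k$ for $k=n,n-1,\ldots,1$ yields exactly $b^{(i)}_{k-1}-c\,b^{(i)}_k=a_k$. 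In particular, $k=n$ gives $b^{(i)}_{n-1}=a_n$, which is the initial value stated for~\eqref{E:b-RecRel-II}. The coefficient of $y^0$ gives $-c\,b^{(i)}_0=a_0$. This identity is consistent with $P_n(\tau_i)=0$ and is not needed for the lemma.

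For~\eqref{E:b-RecRel-II}, fix $k\in\{1,\ldots,n-1\}$. The relation~\eqref{E:b-RecRel-I} at indices $k$ and $k+1$ expresses $a_k=b^{(i)}_{k-1}-c\,b^{(i)}_k$ and $a_{k+1}=b^{(i)}_k-c\,b^{(i)}_{k+1}$; both indices lie in the admissible range $1,\ldots,n$. Substitute these into~\eqref{E:a-RecRel}, written as $2(k+1)^2a_{k+1}+(n-k)(n+k+1)a_k=0$:
\[
2(k+1)^2\bigl(b^{(i)}_k-c\,b^{(i)}_{k+1}\bigr)+(n-k)(n+k+1)\bigl(b^{(i)}_{k-1}-c\,b^{(i)}_k\bigr)=0.
\]
Collecting the terms in $b^{(i)}_{k-1}$, $b^{(i)}_k$ and $b^{(i)}_{k+1}$ gives precisely~\eqref{E:b-RecRel-II}. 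For $k=n-1$ the term $b^{(i)}_{n}=0$ appears, consistent with the convention.

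There is no real obstacle here. The only care needed is index bookkeeping: the comparison of coefficients has to respect the convention $b^{(i)}_n=0$, and the range $k=1,\ldots,n-1$ in~\eqref{E:b-RecRel-II} has to be matched with the range of validity of~\eqref{E:a-RecRel} and~\eqref{E:b-RecRel-I}.
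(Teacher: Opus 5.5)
Your proof is correct and takes the same route as the paper: you expand $(x+1-(\tau_i+1))\sum_k b^{(i)}_k(x+1)^k$ and compare coefficients of $(x+1)^k$ to get the first-order relation, then substitute that relation at indices $k$ and $k+1$ into~\eqref{E:a-RecRel} to obtain the second-order one. Your explicit check of the index ranges fills in bookkeeping the paper leaves implicit, including the remark that the $k=0$ coefficient identity is not needed.
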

\begin{proof}
We have
\begin{eqnarray*}
P_n(x)&=&(x-\tau_i)\sum_{k=0}^{n-1}b^{(i)}_k(x+1)^k=
         (x+1-(\tau_i+1))\sum_{k=0}^{n-1}b^{(i)}_k(x+1)^k\\
      &=&\sum_{k=0}^{n-1}b^{(i)}_k(x+1)^{k+1}-
         (\tau_i+1)\sum_{k=0}^{n-1}b^{(i)}_k(x+1)^k\\
      &=&\sum_{k=0}^{n}
               \left(b^{(i)}_{k-1}-(\tau_i+1)b^{(i)}_{k}\right)(x+1)^k,
\end{eqnarray*}
where $b^{(i)}_{-1}=b^{(i)}_{n}:=0$. It follows that 
the recurrence~\eqref{E:b-RecRel-I} holds.

Combining~\eqref{E:b-RecRel-I} with~\eqref{E:a-RecRel} yields 
the recurrence~\eqref{E:b-RecRel-II}.
\end{proof}

\begin{remark}\label{R:Rem-power-coeffs}
Solving the recurrence~\eqref{E:b-RecRel-I} gives an explicit expression 
for the coefficients~$b^{(i)}_{k}$,
$$
b^{(i)}_{k}=a_{k+1}
                \hyper{3}{2}{k-n+1,\,n+k+2,\,1}{k+2,\,k+2}{\frac{\tau_i+1}{2}}
$$
(cf.~\eqref{E:P_n-power}), where $k=0,1,\ldots,n-1$.
\end{remark}

Now, it is easy to integrate the expansion~\eqref{E:P_n/(x-tau_i)-power} over 
$x\in[-1,t]$ $(-1\leq t\leq 1)$ and obtain a~representation of the polynomials
$F^n_i$ $(i=0,1,\ldots,n)$ in the shifted power basis.

Summarizing, the following statements are true.

\begin{theorem}\label{T:F^n_i-power}
The GL polynomial $F^n_i$ (cf.~\eqref{E:Def-F^n_i}) has the following 
representation in the shifted power basis:
$$
F^n_i(t)=\sum_{k=1}^{n}c^{(i)}_{k}(t+1)^k,
$$
where $i=1,\ldots,n-1$, and
\begin{equation}\label{E:Def-c^i_k}
c^{(i)}_{k}:=\frac{n}{2k}
                 \left(P_{n-1}(\tau_i)b^{(i)}_{k-1}-
                       P_{n-1}(\tau_{i+1})b^{(i+1)}_{k-1}\right)
                                                       \qquad (1\leq k\leq n).
\end{equation}
We also have
\begin{eqnarray*}
&&F^n_0(t)=1-\frac{1}{2}nP_{n-1}(\tau_1)
                \sum_{k=1}^{n}\frac{1}{k}b^{(1)}_{k-1}(t+1)^k,\\
&&F^n_n(t)=\frac{1}{2}nP_{n-1}(\tau_n)
                \sum_{k=1}^{n}\frac{1}{k}b^{(n)}_{k-1}(t+1)^k.
\end{eqnarray*}
\end{theorem}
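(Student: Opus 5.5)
The plan is to integrate the expansion \eqref{E:P_n/(x-tau_i)-power} term by term and substitute the result into the definitions \eqref{E:Def-G^n_i} and \eqref{E:Def-F^n_i}.

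First, for $1\leq i\leq n$ and $-1\leq t\leq 1$, Lemma~\ref{L:RecRel-power} gives
$$
\int_{-1}^{t}\frac{P_n(x)}{x-\tau_i}\,\mbox{d}x
=\sum_{k=0}^{n-1}b^{(i)}_k\int_{-1}^{t}(x+1)^k\,\mbox{d}x
=\sum_{k=1}^{n}\frac{1}{k}\,b^{(i)}_{k-1}(t+1)^k .
$$
The lower limit contributes nothing because every term vanishes at $x=-1$. The integrand is a genuine polynomial, since $\tau_i$ is a zero of $P_n$, so the term-by-term integration is legitimate. Inserting this into \eqref{E:Def-G^n_i} yields
$$
G^n_i(t)=\frac{nP_{n-1}(\tau_i)}{2}\sum_{k=1}^{n}\frac{1}{k}\,b^{(i)}_{k-1}(t+1)^k-\frac12
\qquad (1\leq i\leq n).
$$

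Second, for $1\leq i\leq n-1$, I form $F^n_i=G^n_i-G^n_{i+1}$. The constants $-\tfrac12$ cancel, and collecting the coefficient of $(t+1)^k$ gives exactly $c^{(i)}_k$ as in \eqref{E:Def-c^i_k}. The sum starts at $k=1$, which is consistent with $F^n_i(-1)=0$ in \eqref{E:F^n_i-delta}.

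Third, the boundary cases. For $i=0$, use $G^n_0\equiv\tfrac12$, so $F^n_0=\tfrac12-G^n_1=1-\tfrac12 nP_{n-1}(\tau_1)\sum_{k=1}^{n}\tfrac1k b^{(1)}_{k-1}(t+1)^k$. For $i=n$, use $G^n_{n+1}\equiv-\tfrac12$, so $F^n_n=G^n_n+\tfrac12$; the constant cancels and the stated formula follows.

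There is no real obstacle here. The only point needing care is justifying the expansion \eqref{E:P_n/(x-tau_i)-power}, namely that $P_n(x)/(x-\tau_i)$ is a polynomial of degree $n-1$, which holds by Assumption~\ref{A:Assumption-1}. Beyond that, the proof only requires keeping the constants $\pm\tfrac12$ straight in the two boundary cases.
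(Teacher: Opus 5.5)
Your proposal is correct and follows the same route as the paper, which simply says to integrate the expansion \eqref{E:P_n/(x-tau_i)-power} over $[-1,t]$ and substitute into \eqref{E:Def-G^n_i} and \eqref{E:Def-F^n_i}. Your explicit handling of the index shift $k\mapsto k-1$ and of the boundary constants $G^n_0\equiv\frac12$, $G^n_{n+1}\equiv-\frac12$ fills in exactly the bookkeeping the paper leaves implicit.
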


Hence, for fixed $0\leq i\leq n$, $F^n_i(t)$ $(t\in\mathbb R)$ can be evaluated 
using Horner's scheme (see, e.g., \cite{WG}) in $O(n)$ time 
(cf.~recurrences~\eqref{E:JacobiRecRel}, \eqref{E:a-RecRel} and 
Lemma~\ref{L:RecRel-power}).

\begin{corollary}\label{C:Cor1}
For a given $t\in(-1,1)$ (cf.~\eqref{E:F^n_i-delta}), all the values
$$
F^n_0(t),F^n_1(t),\ldots,F^n_n(t)
$$
can be evaluated in $O(n^2)$ time using Horner's schemes for representations 
given in Theorem~\ref{T:F^n_i-power}. Note that the cost of the 
computations can be reduced using the symmetries~\eqref{E:P_k-symm}, 
\eqref{E:F-symm}. This is discussed in Section~\ref{SS:FirstApproach}.

Thus, using the approach described in this paragraph, a point on the GL 
curve~\eqref{E:Def-GL-curve} in $\mathbb E^d$ can be computed component-wise
with a computational complexity of $O(n^2+dn)$.
\end{corollary}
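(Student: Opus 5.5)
The statement to justify is Corollary~\ref{C:Cor1}: the $n+1$ values $F^n_0(t),\dots,F^n_n(t)$ at a fixed $t\in(-1,1)$ can be computed in $O(n^2)$ time, and a point $\p{P}_n(t)$ in $O(n^2+dn)$ time. The plan is a direct operation count based on Theorem~\ref{T:F^n_i-power}. All arithmetic is in exact operation counts, assuming the zeros $\tau_i$ are given (Assumption~\ref{A:Assumption-1}).

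\emph{Step 1: the coefficients $a_k$.} Compute $a_n=2^{-n}\binom{2n}{n}$ in $O(n)$. Then obtain $a_{n-1},\dots,a_0$ from the two-term recurrence~\eqref{E:a-RecRel}, which costs $O(1)$ each. These coefficients are independent of $i$ and $t$, so this step costs $O(n)$ in total.

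\emph{Step 2: the values $P_{n-1}(\tau_i)$.} For each $i=1,\dots,n$, evaluate $P_{n-1}(\tau_i)$ by the three-term recurrence~\eqref{E:JacobiRecRel} with $\alpha=0$. This costs $O(n)$ per $i$, hence $O(n^2)$ in total. By~\eqref{E:P_k-symm} and the symmetry $\tau_{n+1-i}=-\tau_i$ of the Legendre zeros, only about half of these evaluations are actually needed.

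\emph{Step 3: the coefficients $b^{(i)}_k$.} For each $i$, run the first-order recurrence~\eqref{E:b-RecRel-I} of Lemma~\ref{L:RecRel-power} backwards:
\[
b^{(i)}_{k-1}=a_k+(\tau_i+1)b^{(i)}_k,\qquad k=n,\dots,1,\quad b^{(i)}_n=0.
\]
This is $O(n)$ per $i$, so $O(n^2)$ overall.

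\emph{Step 4: coefficients of $F^n_i$ and evaluation.} Form the coefficients $c^{(i)}_k$ from~\eqref{E:Def-c^i_k}, together with the analogous coefficients for $F^n_0$ and $F^n_n$. Each needs $O(1)$ operations, giving $O(n^2)$ in total. Then evaluate each polynomial $\sum_{k=1}^n c^{(i)}_k(t+1)^k$ by Horner's scheme in the variable $t+1$. This costs $O(n)$ per $i$, hence $O(n^2)$ for all $n+1$ polynomials.

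\emph{Optional halving.} By~\eqref{E:F-symm}, $F^n_{n-i}(t)=-F^n_i(-t)$. One can therefore evaluate only the indices $i\le n/2$, at both $t$ and $-t$, and recover the remaining values from this identity. This halves the constant but does not change the order, so I would only remark on it.

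\emph{Curve point.} Having all $F^n_i(t)$, form $\p{P}_n(t)=\sum_i \p{W}_iF^n_i(t)$ componentwise. That is $d$ inner products of length $n+1$, costing $O(dn)$, which gives the total $O(n^2+dn)$.

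\emph{Main obstacle.} There is no real difficulty here; the only point requiring care is that nothing in Steps~1--4 exceeds linear cost per index $i$. That holds because $a_k$, $b^{(i)}_k$ and $P_{n-1}(\tau_i)$ are all produced by short recurrences rather than by explicit sums such as the ${}_3F_2$ formula of Remark~\ref{R:Rem-power-coeffs}. Using that formula naively would cost $O(n)$ per coefficient, giving $O(n^2)$ per $i$ and $O(n^3)$ overall, so the recurrence~\eqref{E:b-RecRel-I} is the key ingredient.
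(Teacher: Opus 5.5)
Your argument is correct and is essentially the paper's own route (the remark after Theorem~\ref{T:F^n_i-power} and the first approach of Section~\ref{SS:FirstApproach}): compute $a_k$ from \eqref{E:a-RecRel}, $P_{n-1}(\tau_i)$ from \eqref{E:JacobiRecRel} and $b^{(i)}_k$ from \eqref{E:b-RecRel-I}, then apply Horner's scheme to each $F^n_i$ and spend $O(dn)$ on the curve point. One caveat concerns your optional halving: the identity $F^n_{n-i}(t)=-F^n_i(-t)$, which you take from \eqref{E:F-symm} as printed, has the wrong sign. From $G^n_i(-t)=-G^n_{n-i+1}(t)$ one actually gets $F^n_i(-t)=F^n_{n-i}(t)$, as $F^n_0(-1)=F^n_n(1)=1$ or the partition of unity confirms; this does not affect the $O(n^2)$ and $O(n^2+dn)$ bounds.
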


\subsection{Symmetric Jacobi basis}                 \label{SS:SymmJacobiBasis}

In this paragraph, we show that, using the Christoffel–Darboux 
identity~\eqref{E:Chris-Dar} and other results presented in 
Section~\ref{S:Jacobi}, one can derive simple expansions of the polynomials 
$G^n_i$ $(i=1,\ldots,n)$ closely related to the symmetric Jacobi bases. Note 
that the coefficients in those representations are given in terms of the values 
of Legendre polynomials at the zeros $\tau_1,\tau_2,\ldots,\tau_n$ of $P_n$.

Let us define
\begin{equation}\label{E:Def-R_k}
R_k(t):=P_{k+1}(t)-P_{k-1}(t)\qquad (k=0,1,\ldots).
\end{equation}
   
\begin{lemma}\label{L:G^n_i-Jacobi-exp}
For $i=1,2,\ldots,n$, the following identities are true:
\begin{equation}\label{E:G^n_i-Legendre}
G^n_i(t)=-\frac{1}{2}\sum_{k=0}^{n-1}R_k(\tau_i)P_k(t)
                                           +\frac{1}{2}P_{n-1}(\tau_i)P_n(t).
\end{equation}
\begin{equation}\label{E:G^n_i-P1}
G^n_i(t)=\frac{t}{2}+\frac{t^2-1}{2}
             \sum_{k=1}^{n-1}\frac{2k+1}{2k}P_k(\tau_i)P^{(1)}_{k-1}(t).
\end{equation}
\end{lemma}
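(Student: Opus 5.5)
Both identities come from expanding the integrand $P_n(x)/(x-\tau_i)$ via the Christoffel--Darboux formula~\eqref{E:Chris-Dar} and integrating term by term.

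\emph{Expanding the integrand.} Apply~\eqref{E:Chris-Dar} with $k=n-1$ and $y=\tau_i$. Since $P_n(\tau_i)=0$, it gives
$$
\sum_{k=0}^{n-1}(2k+1)P_k(x)P_k(\tau_i)=n\,\frac{P_{n-1}(\tau_i)P_n(x)}{x-\tau_i}.
$$
For $x\neq\tau_i$ this reads $nP_{n-1}(\tau_i)\frac{P_n(x)}{x-\tau_i}=\sum_{k=0}^{n-1}(2k+1)P_k(\tau_i)P_k(x)$. Both sides are polynomials, so the identity holds for all $x$. Substituting into~\eqref{E:Def-G^n_i} gives
$$
G^n_i(t)=\frac12\sum_{k=0}^{n-1}(2k+1)P_k(\tau_i)\int_{-1}^{t}P_k(x)\,\mbox{d}x-\frac12 .
$$

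\emph{Proof of~\eqref{E:G^n_i-Legendre}.} Use~\eqref{E:Int-P_k-1}, which gives $(2k+1)\int_{-1}^tP_k=R_k(t)-R_k(-1)$ with $R_k$ as in~\eqref{E:Def-R_k}. By~\eqref{E:P_k-symm}, $R_k(-1)=0$ for $k\geq1$, while $R_0(t)=P_1(t)=t$ with $R_0(-1)=-1$. Hence
$$
G^n_i(t)=\frac12\sum_{k=0}^{n-1}P_k(\tau_i)R_k(t).
$$
This is a sum of products $P_k(\tau_i)P_{k\pm1}(t)$; in particular the $k=0$ term's constant $P_0(\tau_i)R_0(-1)=-1$ cancels the $-\frac12$.

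Re-index by collecting the coefficient of each $P_m(t)$. The term $P_{m-1}(\tau_i)P_m(t)$ comes from $k=m-1$, and $-P_{m+1}(\tau_i)P_m(t)$ comes from $k=m+1$. So for $m\le n-1$ the coefficient is $\frac12\bigl(P_{m-1}(\tau_i)-P_{m+1}(\tau_i)\bigr)=-\frac12R_m(\tau_i)$. For $m=n-1$ the missing $k=n$ term is harmless because $P_n(\tau_i)=0$. The coefficient of $P_n(t)$ is $\frac12P_{n-1}(\tau_i)$. This proves~\eqref{E:G^n_i-Legendre}. The main obstacle is this boundary bookkeeping at $m=0$, $m=n-1$ and $m=n$, together with the convention $P_{-1}\equiv0$.

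\emph{Proof of~\eqref{E:G^n_i-P1}.} Go back to the integral form and split off the $k=0$ term. Since $\int_{-1}^t P_0=t+1$, that term equals $\frac12(t+1)$, and combined with $-\frac12$ it gives $\frac t2$. For $k\ge1$, use~\eqref{E:Int-P_k-2}: $\int_{-1}^tP_k=\frac{1}{k(k+1)}(t^2-1)P_k'(t)$, because the boundary term vanishes at $t=-1$. Then apply~\eqref{E:Jacobi-diff} with $\alpha=0$, $i=1$, which gives $P_k'(t)=\frac{k+1}{2}P^{(1)}_{k-1}(t)$. Thus
$$
\frac{2k+1}{2}\,P_k(\tau_i)\int_{-1}^tP_k(x)\,\mbox{d}x=\frac{t^2-1}{2}\,\frac{2k+1}{2k}\,P_k(\tau_i)P^{(1)}_{k-1}(t).
$$
Summing over $k=1,\ldots,n-1$ yields~\eqref{E:G^n_i-P1}. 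This second identity is routine once the Christoffel--Darboux step is in place.
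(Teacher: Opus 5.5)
Your proof is correct and follows essentially the paper's route: expand $P_n(x)/(x-\tau_i)$ with the Christoffel--Darboux formula~\eqref{E:Chris-Dar}, then integrate term by term using~\eqref{E:Int-P_k-1} for the first identity and~\eqref{E:Int-P_k-2} with~\eqref{E:Jacobi-diff} for the second. The only minor difference is that you apply~\eqref{E:Chris-Dar} with $k=n-1$, which gives the factor $nP_{n-1}(\tau_i)$ directly, whereas the paper takes $k=n$ and converts $-(n+1)P_{n+1}(\tau_i)$ into $nP_{n-1}(\tau_i)$ via the three-term recurrence; you also write out the re-indexing and boundary bookkeeping that the paper leaves implicit.
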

\begin{proof}
From the recurrence relation~\eqref{E:JacobiRecRel} for $k:=n+1$, $\alpha:=0$,
and $x:=\tau_i$ (cf.~\eqref{E:Def-tau_i}), it follows that
\begin{equation}\label{E:Rel-tau_i}
(n+1)P_{n+1}(\tau_i)=-nP_{n-1}(\tau_i),
\end{equation}
because $P_n(\tau_i)=0$. While from the Christoffel–Darboux 
formula~\eqref{E:Chris-Dar} with $k:=n$ and $y:=\tau_i$, we have
\begin{equation}\label{E:Chris-Dar-tau_i}
\displaystyle
\sum_{k=0}^{n}(2k+1)P_k(\tau_i)P_k(x)=
           -(n+1)P_{n+1}(\tau_i)\frac{P_n(x)}{x-\tau_i}=
             nP_{n-1}(\tau_i)\frac{P_n(x)}{x-\tau_i},
\end{equation}
where we used~\eqref{E:Rel-tau_i}. Here $i=1,2,\ldots,n$.

Integrating the right-hand side of~\eqref{E:Chris-Dar-tau_i} over $x\in[-1,t]$ 
with $t\in[-1,1]$ and using~\eqref{E:Int-P_k-1}, we get~\eqref{E:G^n_i-Legendre}, 
because $P_k(-1)=(-1)^k$ for $k\in\mathbb N$ (cf.~\eqref{E:P_n-power}).

Applying~\eqref{E:Jacobi-diff} for $\alpha:=0$, $i:=1$ to~\eqref{E:Int-P_k-2}, 
we obtain
$$
\int P_{k}(x)\,\mbox{d}x=\frac{1}{2k}(x^2-1)P^{(1)}_{k-1}(x)+\mbox{const.}
                                                               \qquad (k\geq1)
$$
(cf.~\eqref{E:Def-JacobiSymm}). Integrating as above and using the latter, 
we derive the identity~\eqref{E:G^n_i-P1}.
\end{proof}

We are now ready to formulate the main results of this paragraph.

\begin{theorem}\label{T:F^n_i-Jacobi-0}
The GL polynomial $F^n_i$ (cf.~\eqref{E:Def-F^n_i}) has the following 
representation in the Legendre basis:
\begin{equation*}
F^n_i(t)=\frac{1}{2}\left(P_{n-1}(\tau_i)-P_{n-1}(\tau_{i+1})\right)P_n(t)+
                \frac{1}{2}\sum_{k=0}^{n-1}
                           \left(R_{k}(\tau_{i+1})-R_{k}(\tau_i)\right)P_k(t),
\end{equation*}
where $i=1,\ldots,n-1$. We also have
\begin{eqnarray*}
&&F^n_0(t)=\frac{1}{2}-\frac{1}{2}P_{n-1}(\tau_1)P_n(t)+
                      \frac{1}{2}\sum_{k=0}^{n-1}R_{k}(\tau_1)P_k(t),\\
&&F^n_n(t)=\frac{1}{2}+\frac{1}{2}P_{n-1}(\tau_n)P_n(t)-
                      \frac{1}{2}\sum_{k=0}^{n-1}R_{k}(\tau_n)P_k(t).
\end{eqnarray*}
\end{theorem}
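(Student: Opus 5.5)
The plan is to substitute the Legendre expansion~\eqref{E:G^n_i-Legendre} of Lemma~\ref{L:G^n_i-Jacobi-exp} into the definition~\eqref{E:Def-F^n_i}, $F^n_i=G^n_i-G^n_{i+1}$, and then collect coefficients of the Legendre basis. No new analytic ingredient should be needed; the work is bookkeeping, with attention to the two boundary indices, where one of the $G$'s is the constant $\pm\frac12$ rather than given by~\eqref{E:Def-G^n_i}.

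For the interior indices $i=1,\ldots,n-1$, both $i$ and $i+1$ lie in $\{1,\ldots,n\}$, so~\eqref{E:G^n_i-Legendre} applies to $G^n_i$ and to $G^n_{i+1}$. I will write
\[
G^n_i(t)-G^n_{i+1}(t)=-\frac12\sum_{k=0}^{n-1}\bigl(R_k(\tau_i)-R_k(\tau_{i+1})\bigr)P_k(t)+\frac12\bigl(P_{n-1}(\tau_i)-P_{n-1}(\tau_{i+1})\bigr)P_n(t).
\]
Flipping the sign inside the sum turns the first term into $\frac12\sum_{k=0}^{n-1}\bigl(R_k(\tau_{i+1})-R_k(\tau_i)\bigr)P_k(t)$, which is exactly the claimed formula.

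For $i=0$, I use $G^n_0\equiv\frac12$, so that $F^n_0=\frac12-G^n_1$. Inserting~\eqref{E:G^n_i-Legendre} with $i=1$ gives $\frac12+\frac12\sum_{k}R_k(\tau_1)P_k(t)-\frac12P_{n-1}(\tau_1)P_n(t)$, as stated. For $i=n$, I use $G^n_{n+1}\equiv-\frac12$, so that $F^n_n=G^n_n+\frac12$. Substituting~\eqref{E:G^n_i-Legendre} with $i=n$ gives the stated expression.

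The only potential subtlety is making sure the constants $\pm\frac12$ are handled correctly. In~\eqref{E:G^n_i-Legendre}, the constant $-\frac12$ from~\eqref{E:Def-G^n_i} has already been absorbed into the $k=0$ term, since $R_0(\tau_i)P_0=P_1(\tau_i)$ with $P_{-1}\equiv0$. I would check this consistency once by evaluating at $t=-1$: using $P_k(-1)=(-1)^k$, the sum should telescope to give $G^n_i(-1)=-\frac12$. That same check confirms $F^n_0(-1)=1$ via~\eqref{E:F^n_i-delta}. Beyond this check, the proof is direct substitution.
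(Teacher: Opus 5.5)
Your proposal is correct and is essentially the paper's argument: the theorem is obtained by substituting~\eqref{E:G^n_i-Legendre} into $F^n_i=G^n_i-G^n_{i+1}$, with $G^n_0\equiv\frac12$ and $G^n_{n+1}\equiv-\frac12$ handling the cases $i=0$ and $i=n$. One small correction to your side remark: in the proof of the lemma, the $-\frac12$ from~\eqref{E:Def-G^n_i} is not absorbed into the $k=0$ term but cancelled by the $+\frac12$ that comes from $\int_{-1}^{t}P_0(x)\,\mbox{d}x=R_0(t)+1$; since you take the lemma as given, this does not affect your proof, and your check $G^n_i(-1)=-\frac12$ does work out.
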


\begin{theorem}\label{T:F^n_i-Jacobi-1}
The GL polynomial $F^n_i$ can be written as:
\begin{equation*}
F^n_i(t)=\frac{1}{2}(t^2-1)\left(
             \sum_{k=1}^{n-1}\frac{2k+1}{2k}
                       (P_k(\tau_i)-P_k(\tau_{i+1}))P^{(1)}_{k-1}(t)\right)
\end{equation*}
(cf.~\eqref{E:Def-JacobiSymm}), where $i=1,\ldots,n-1$. It is also true that
\begin{eqnarray*}
&&F^n_0(t)=1-\frac{1}{2}(t+1)\left(1+(t-1)
          \sum_{k=1}^{n-1}\frac{2k+1}{2k}P_k(\tau_1)P^{(1)}_{k-1}(t)\right),\\
&&F^n_n(t)=\frac{1}{2}(t+1)\left(1+(t-1)
             \sum_{k=1}^{n-1}\frac{2k+1}{2k}P_k(\tau_n)P^{(1)}_{k-1}(t)\right).
\end{eqnarray*}
\end{theorem}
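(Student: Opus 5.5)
The plan is to substitute the second identity of Lemma~\ref{L:G^n_i-Jacobi-exp}, namely \eqref{E:G^n_i-P1}, directly into the definition \eqref{E:Def-F^n_i}, $F^n_i=G^n_i-G^n_{i+1}$, and to treat the interior indices and the two boundary indices separately.

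For $1\le i\le n-1$, both $G^n_i$ and $G^n_{i+1}$ are given by \eqref{E:G^n_i-P1}. Each contains the common term $t/2$, which cancels in the difference. The remaining parts share the factor $\frac{t^2-1}{2}$ and the same basis functions $\frac{2k+1}{2k}P^{(1)}_{k-1}(t)$, differing only in the factor $P_k(\tau_i)$ versus $P_k(\tau_{i+1})$. Subtracting term by term under a single sum gives the claimed formula with coefficients $P_k(\tau_i)-P_k(\tau_{i+1})$.

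For the boundary cases I use the conventions $G^n_0\equiv\frac12$ and $G^n_{n+1}\equiv-\frac12$ stated after \eqref{E:Def-G^n_i}.
\begin{itemize}
\item For $i=0$ we have $F^n_0=\frac12-G^n_1$. Inserting \eqref{E:G^n_i-P1} with $i=1$ gives
\[
F^n_0(t)=\frac12-\frac t2-\frac{t^2-1}{2}\sum_{k=1}^{n-1}\frac{2k+1}{2k}P_k(\tau_1)P^{(1)}_{k-1}(t).
\]
I then rewrite $\frac12-\frac t2$ as $1-\frac12(t+1)$, factor $t^2-1=(t+1)(t-1)$, and pull out $\frac12(t+1)$. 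This yields exactly $1-\frac12(t+1)\bigl(1+(t-1)\sum\cdots\bigr)$.
\item For $i=n$ we have $F^n_n=G^n_n+\frac12$. Adding $\frac12$ to \eqref{E:G^n_i-P1} with $i=n$ gives $\frac{t+1}{2}+\frac{(t+1)(t-1)}{2}\sum\cdots$, and factoring out $\frac12(t+1)$ gives the stated expression.
\end{itemize}

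No step is a genuine obstacle, since all the analytic work (the Christoffel–Darboux formula and the integration) is already contained in Lemma~\ref{L:G^n_i-Jacobi-exp}. The only points needing care are the boundary conventions $G^n_0=-G^n_{n+1}=\frac12$ and the signs in the factorization $t^2-1=(t+1)(t-1)$.

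As an optional consistency check, the $F^n_n$ formula can be compared with the symmetry \eqref{E:F-symm}, $F^n_0(-t)=-F^n_n(t)$, using \eqref{E:P_k-symm}, the relation $\tau_{n+1-i}=-\tau_i$, and the parity $P^{(1)}_{k-1}(-t)=(-1)^{k-1}P^{(1)}_{k-1}(t)$.
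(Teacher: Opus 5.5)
Your proposal is correct and matches the argument the paper leaves implicit: substitute \eqref{E:G^n_i-P1} into $F^n_i=G^n_i-G^n_{i+1}$, use $G^n_0=-G^n_{n+1}=\frac12$, and factor $t^2-1=(t+1)(t-1)$ for the two boundary polynomials. On your optional check only: carrying it out gives $F^n_0(-t)=+F^n_n(t)$, as it must since $F^n_0(-1)=1=F^n_n(1)$, because \eqref{E:F-symm} as printed has a sign slip --- $G^n_i(-t)=-G^n_{n-i+1}(t)$ actually yields $F^n_i(-t)=F^n_{n-i}(t)$ --- so that outcome does not indicate an error in your formulas.
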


Observe that the expansions given in Theorem~\ref{T:F^n_i-Jacobi-1} immediately 
imply~\eqref{E:F^n_i-delta}. These representations are also better suited for 
practical applications than those in the Legendre basis 
(cf.~Theorem~\ref{T:F^n_i-Jacobi-0} and Eq.~\eqref{E:Def-R_k}). However, by 
using both results, the recurrence relation~\eqref{E:JacobiRecRel} and the 
Clenshaw algorithm (see~\S\ref{S:Jacobi}, p.~\pageref{E:Clenshaw}), the value 
$F^n_i(t)$ for a~given $t\in\mathbb R$ can be computed with $O(n)$ 
computational complexity.

\begin{corollary}\label{C:Cor2}
Let $t\in(-1,1)$ (cf.~\eqref{E:F^n_i-delta}). All the values
$$
F^n_0(t),F^n_1(t),\ldots,F^n_n(t)
$$
can be evaluated in $O(n^2)$ time using the Clenshaw algorithm for the 
representations given in Theorems~\ref{T:F^n_i-Jacobi-0} 
or~\ref{T:F^n_i-Jacobi-1}. As noted (cf.~Corollary~\ref{C:Cor1}), the 
computational cost can be decreased using the symmetries~\eqref{E:P_k-symm},
\eqref{E:F-symm}. This is discussed in Section~\ref{SS:SecondApproach}, where 
we make use of Theorem~\ref{T:F^n_i-Jacobi-1}.

In summary, using the methods proposed in this paragraph, one can evaluate 
a point on the GL curve~\eqref{E:Def-GL-curve} in 
$\mathbb E^d$ component-wise with $O(n^2+dn)$ computational complexity.
\end{corollary}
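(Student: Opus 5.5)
The statement is Corollary~\ref{C:Cor2}. The plan is to count operations directly from the representations in Theorems~\ref{T:F^n_i-Jacobi-0} and~\ref{T:F^n_i-Jacobi-1}, which we take as established via Lemma~\ref{L:G^n_i-Jacobi-exp} and \eqref{E:Def-F^n_i}. I work with Theorem~\ref{T:F^n_i-Jacobi-1}; the Legendre-basis case is identical with $\alpha=0$ in place of $\alpha=1$. The argument has three steps: first compute all coefficients, then evaluate all basis polynomials, then form the combination with the control points.

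\textbf{Coefficients.} Each expansion needs the numbers $P_k(\tau_j)$ for $0\le k\le n$ and $1\le j\le n$. By Assumption~\ref{A:Assumption-1} the nodes $\tau_j$ are known. For a fixed $j$, the three-term recurrence \eqref{E:JacobiRecRel} with $\alpha=0$ and $x=\tau_j$ produces $P_0(\tau_j),\ldots,P_n(\tau_j)$ in $O(n)$ operations. Over all $j$ this costs $O(n^2)$. From these values we get:
\begin{itemize}
\item the coefficients $\frac{2k+1}{2k}\bigl(P_k(\tau_i)-P_k(\tau_{i+1})\bigr)$ in Theorem~\ref{T:F^n_i-Jacobi-1};
\item the differences $R_k(\tau_{i+1})-R_k(\tau_i)$ in Theorem~\ref{T:F^n_i-Jacobi-0}, using \eqref{E:Def-R_k}.
\end{itemize}
Each coefficient costs $O(1)$, and there are $O(n^2)$ of them.

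\textbf{Evaluation.} Fix $t\in(-1,1)$. For each $i=0,\ldots,n$, the polynomial $F^n_i(t)$ is an explicit prefactor ($\frac12(t^2-1)$, or the analogous affine expressions for $i=0$ and $i=n$) times a finite sum $\sum_{k=1}^{n-1}v^{(i)}_kP^{(1)}_{k-1}(t)$. Such a sum is in symmetric Jacobi form with $\alpha=1$. The Clenshaw algorithm of \S\ref{S:Jacobi} evaluates it in $O(n)$ operations. Doing this for all $n+1$ indices gives $O(n^2)$.

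A cheaper alternative avoids Clenshaw. Since the basis functions do not depend on $i$, one can compute $P^{(1)}_0(t),\ldots,P^{(1)}_{n-2}(t)$ once by \eqref{E:JacobiRecRel}, at cost $O(n)$. Then all the $F^n_i(t)$ are obtained as $n+1$ inner products, for a total of $O(n^2)$.

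\textbf{Curve point.} With all $F^n_i(t)$ known, evaluate \eqref{E:Def-GL-curve} component-wise. Each of the $d$ coordinates needs $n+1$ multiply-adds, so this step costs $O(dn)$. The total cost is therefore $O(n^2+dn)$.

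\textbf{Remark on symmetries.} Using \eqref{E:P_k-symm} and \eqref{E:F-symm}, one can compute only about half of the $F^n_i(t)$ and recover the rest, and likewise reuse the symmetric node values $P_k(\tau_{n+1-j})=(-1)^kP_k(\tau_j)$. This lowers the constant but not the order, so it is not needed for the stated bound.

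\textbf{Main obstacle.} No step is deep. The only real care is to keep the coefficient precomputation at $O(n^2)$ rather than $O(n^3)$. This holds because every coefficient comes from the shared table $\{P_k(\tau_j)\}$, built once by recurrence, and is not recomputed separately for each~$i$.
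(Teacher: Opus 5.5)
Your proposal is correct and follows essentially the paper's own reasoning (cf.\ \S\ref{SS:SecondApproach}): tabulate $P_k(\tau_j)$ once via the recurrence~\eqref{E:JacobiRecRel} with $\alpha=0$ in $O(n^2)$ time, run the Clenshaw algorithm once per index $i$ at $O(n)$ cost each (with $\alpha=1$, or $\alpha=0$ for the Legendre form), and then spend $O(dn)$ on the component-wise combination with the control points. Your alternative of tabulating $P^{(1)}_0(t),\ldots,P^{(1)}_{n-2}(t)$ once and forming $n+1$ inner products is also valid and gives the same $O(n^2)$ bound.
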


\subsection{Derivatives}                                \label{SS:Derivatives}

In~\cite[Theorem 16]{Moon2023} a complicated general formula for the $m$th 
derivative of a GL polynomial $F^n_i$ was derived, where the derivative is 
expressed in terms of \textit{elementary symmetric polynomials}. 

It is now clear that, using the representations of GL polynomials 
from~\S\ref{SS:PowerBasis} and~\S\ref{SS:SymmJacobiBasis}, we can 
find simple expressions for their $m$th derivatives.

For example, if $1\leq i\leq n-1$ and $m>0$, we have
\begin{equation}\label{E:F^n_i-diff-1}
\frac{\mbox{d}^m}{\mbox{d}t^m}F^n_i(t)=
                             \sum_{k=m}^{n}(k-m+1)_mc^{(i)}_{k}(t+1)^{k-m}
\end{equation}
(cf.~Theorem~\ref{T:F^n_i-power}) and, in particular, 
$$
\frac{\mbox{d}^m}{\mbox{d}t^m}F^n_i(-1)=
            (-1)^{m+1}\frac{\mbox{d}^m}{\mbox{d}t^m}F^n_{n-i}(1)=
                                      m!c^{(i)}_{m}\qquad (i=1,2,\ldots,n-1).
$$ 
On the other hand, using~\eqref{E:Jacobi-diff} for $\alpha:=0$ yields the 
following formula:
\begin{equation}\label{E:F^n_i-diff-2}
\frac{\mbox{d}^m}{\mbox{d}t^m}F^n_i(t)=
    \frac{(n+1)_m}{2^{m+1}}\left(P_{n-1}(\tau_i)-P_{n-1}(\tau_{i+1})\right)
                                                            P_{n-m}^{(m)}(t)+
           \frac{1}{2^{m+1}}\sum_{k=m}^{n-1}(k+1)_md^{(i)}_kP_{k-m}^{(m)}(t),
\end{equation}
where $d^{(i)}_k:=R_{k}(\tau_{i+1})-R_{k}(\tau_i)$
(see~Theorem~\ref{T:F^n_i-Jacobi-0} and~\eqref{E:Def-R_k}).

Note that the expression~\eqref{E:F^n_i-diff-1} can be evaluated in $O(n-m)$ 
time using the Horner's scheme. Similarly, for a given $t\in\mathbb R$, 
Eq.~\eqref{E:F^n_i-diff-2} can be computed with the same computational 
complexity by applying the recurrence~\eqref{E:JacobiRecRel} and the Clenshaw 
algorithm (cf.~\S\ref{S:Jacobi}, p.~\pageref{E:Clenshaw}).

Certainly, the derivatives of $F^n_0$ and $F^n_n$ can be obtained in a similar 
way.

\section{Evaluation}                                      \label{S:Evaluation}

We now turn to the problem of evaluating a GL curve $\p{P}_n$ of degree $n$ in 
$\mathbb E^d$ (see~\eqref{E:Def-GL-curve}).

More precisely, based on the results presented in~\S\ref{S:NewRep}, we 
propose two methods for computing
$$
F^n_0(t), F^n_1(t),\ldots, F^n_n(t)
$$
for a fixed $t\in\mathbb(-1,1)$ (cf.~\eqref{E:F^n_i-delta}), which run
with $O(n^2)$ computational complexity. Once all these values are known, one 
can determine a point $\p{P}_n(t)$ component-wise in $O(nd)$ time. This 
results in a total computational complexity of $O(n^2+dn)$ (see 
Corollaries~\ref{C:Cor1}, \ref{C:Cor2}).

Moreover, we present a sketch of the algorithms for the evaluation of $M$ 
points on the curve $\p{P}_n$, which work in $O(Mdn+dn^2)$ time.

First of all, let us note that there is a possibility of reducing the 
computational cost by using the symmetries~\eqref{E:F-symm}. Indeed, it is 
enough to consider the problem of evaluating 
$$
F^n_0, F^n_1,\ldots, F^n_{\floor{n/2}},
$$
because we have
\begin{equation}\label{E:P_n-symm}
\p{P}_n(t)=\sum_{i=0}^{\floor{n/2}}\p{W}_iF^n_i(t)-
              \sum_{i=\floor{n/2}+1}^{n}\p{W}_iF^n_{n-i}(-t).
\end{equation}

\subsection{First approach: shifted power basis}      \label{SS:FirstApproach}

Let us use the results from \S\ref{SS:PowerBasis}.

First, we need to compute
$$
P_{n-1}(\tau_i)\qquad (i=1,2,\ldots,\ceil{n/2})
$$
(cf.~Assumption~\ref{A:Assumption-1}). This can be accomplished by the 
recurrence relation~\eqref{E:JacobiRecRel} with $\alpha:=0$ in 
$O(\frac{1}{2}n^2)$ time. It is also worth making use of the following simple 
observation. For odd $n=2m+1$, one of the zeros of $P_n$ is equal to~$0$. Then
$n-1$ is even and
$$ 
P_{n-1}(0)=\frac{(-1)^m}{4^m}\binom{2m}{m}.
$$  

In the second step, we determine the coefficients $b^{(i)}_k$ 
(see~\eqref{E:P_n/(x-tau_i)-power}) for all $i=1,2,\ldots,\ceil{n/2}$ 
(cf.~\eqref{E:Def-c^i_k}) and all $k=0,1,\ldots,n-1$. It should be done with 
the help of the recurrence relations~\eqref{E:b-RecRel-I} 
and~\eqref{E:a-RecRel}.

Finally, for a given $t\in(-1,1)$, we obtain the values
\begin{equation}\label{E:F^n_i_in_+t_and_-t}
F^n_0(\pm t), F^n_1(\pm t),\ldots, F^n_{\floor{n/2}}(\pm t)
\end{equation}
by using the Horner's schemes for expansions given in 
Theorem~\ref{T:F^n_i-power} and thus we also find component-wise the point 
$\p{P}_n(t)$ (cf.~\eqref{E:P_n-symm}).


Note that this approach has $O(n^2+dn)$ computational complexity when 
computing a~single point on an $n$th-degree GL curve in $\mathbb E^d$. 


\subsection{Second approach: symmetric Jacobi basis} \label{SS:SecondApproach}

Now, we present another technique for evaluating GL curves, which uses the
representations from Theorem~\ref{T:F^n_i-Jacobi-1}. Note that the same can be 
done for the representations from Theorem~\ref{T:F^n_i-Jacobi-0}.

To this end, in the first step, we have to find the following values of the 
Legendre polynomials:
$$
P_k(\tau_i)\qquad 
       \mbox{for all $k=1,2,\ldots,n-1$ and $i=1,2,\ldots,\ceil{n/2}$}.
$$
Again, this can be done using the recurrence relation~\eqref{E:JacobiRecRel} 
with $\alpha:=0$ in $O(\frac{1}{2}n^2)$ computational complexity. If $n$ is 
odd, one of the zeros of $P_n$ is equal to~$0$. In this case, we have
$$
P_k(0)=0\quad \mbox{for odd $k$},\qquad 
P_k(0)=\frac{(-1)^{k/2}}{2^k}\binom{k}{k/2}\quad \mbox{for even $k$}.
$$

Next, for a fixed $-1<t<1$, we obtain all the 
values~\eqref{E:F^n_i_in_+t_and_-t} using the Clenshaw algorithms with 
$\alpha:=1$ (see~\S\ref{S:Jacobi}, p.~\pageref{E:Clenshaw}) for the
expansions given in Theorem~\ref{T:F^n_i-Jacobi-1}.


The presented method also has a computational complexity of $O(n^2+dn)$ when 
computing a single point on a GL curve of degree $n$ in $\mathbb E^d$. 


\subsection{Multipoint evaluation}                       \label{SS:MultiEval}

Let us now focus on the multipoint evaluation of the GL curve, as this problem
is essential for its rendering. Suppose we want to compute
\begin{equation}\label{E:MultiPoint}
\p{P}_n(t_0), \p{P}_n(t_0),\ldots, \p{P}_n(t_M), 
\end{equation}
where $-1\leq t_0<t_1<\cdots<t_{M}\leq 1$ $(M\in\mathbb N)$, and 
$\p{P}_n:[-1,1]\to\mathbb E^d$ is a GL curve of degree~$n$ 
(see~\eqref{E:Def-GL-curve}). Since $M\gg n$ and $d$ is small in practical 
applications, we will consider this case in the sequel.

If we use results form~\S\ref{SS:FirstApproach} and~\S\ref{SS:SecondApproach},
we can find all the points~\eqref{E:MultiPoint} in $O(M(n^2+dn))=O(Mn^2)$ time.

Let us observe that it is possible to find these points much faster, meaning
with a computational complexity of $O(Mdn+n^2)=O(Mdn)$.

\begin{remark}\label{R:Rem-1}
For clarity, we sketch a solution which does not use the 
symmetries~\eqref{E:F-symm} (it means the form~\eqref{E:P_n-symm}), as this 
better presents the main idea of fast multipoint evaluation.
\end{remark}

As shown above, we can efficiently compute all the coefficients 
$c^{(i)}_k$ $(i=0,1,\ldots,n;\, k=0,1,\ldots,n)$ such that
$$
F^n_i(t)=\sum_{k=0}^{n}c^{(i)}_k(t+1)^k\qquad (0\leq i\leq n)
$$
(cf.~Theorem~\ref{T:F^n_i-power}) with $O(n^2)$ computational complexity. 

For simplicity, let us suppose that $d=1$ (for higher dimensions of the curve, 
the computations can be done component-wise with the same approach). Set
\begin{equation}\label{E:Def-p_m}
p_n(t):=\sum_{i=0}^{n}w_iF^n_i(t)\qquad (w_0,w_1,\ldots,w_n\in\mathbb R).
\end{equation}
We have
$$
p_n(t)=\sum_{k=0}^{n}q_k(t+1)^k,\qquad
\mbox{where}
\qquad
q_k:=\sum_{i=0}^{n}w_ic^{(i)}_k.
$$
Then all the coefficients $q_k$ $(0\leq k\leq n)$ are independent of $t$ and
they can be computed in $O(n^2)$ time. Next, all the values
\begin{equation}\label{E:MultiPoints-2}
p_n(t_0), p_n(t_1),\ldots, p_n(t_M)
\end{equation}
can be evaluated using Horner's scheme $M+1$ times with $O(Mn)$ total 
computational complexity.

A similar improvement is possible if we prefer the representations given in
Theorem~\ref{T:F^n_i-Jacobi-0} or Theorem~\ref{T:F^n_i-Jacobi-1}. 
For example, as observed above, we can determine all the coefficients 
$g^{(i)}_k$ $(i=0,1,\ldots,n;\, k=0,1,\ldots,n-2)$ such that
\begin{eqnarray*}
&&F^n_0(t)=\frac{1-t}{2}+
                   \frac{t^2-1}{2}\sum_{k=0}^{n-2}g^{(0)}_kP_k^{(1)}(t),\\
&&F^n_i(t)=\frac{t^2-1}{2}\sum_{k=0}^{n-2}g^{(i)}_kP_k^{(1)}(t)
                                                 \qquad (1\leq i\leq n-1),\\ 
&&F^n_n(t)=\frac{1+t}{2}+
                   \frac{t^2-1}{2}\sum_{k=0}^{n-2}g^{(n)}_kP_k^{(1)}(t)
\end{eqnarray*}
in $O(n^2)$ time. Thus, we have
\begin{eqnarray*}
p_n(t)&=&\frac{1-t}{2}w_0+\frac{1+t}{2}w_n+
               \frac{t^2-1}{2}\sum_{i=0}^{n}w_i
                                 \sum_{k=0}^{n-2}g^{(i)}_kP^{(1)}_{k}(t)\\
      &=&\frac{1-t}{2}w_0+\frac{1+t}{2}w_n+
               \frac{t^2-1}{2}\sum_{k=0}^{n-2}s_kP^{(1)}_{k}(t)                                 
\end{eqnarray*}
(cf.~\eqref{E:Def-p_m}), where
$$
s_k:=\sum_{i=0}^{n}w_ig^{(i)}_k.
$$
The coefficients $s_k$ $(0\leq k\leq n)$ are independent of $t$ and they can be 
found with a computational complexity of $O(n^2)$. After that, all the 
values~\eqref{E:MultiPoints-2} can be evaluated using the Clenshaw algorithm 
$M+1$ times in $O(Mn)$ time in total.

\begin{remark}\label{R:RemOpt}
Note that the methods for the fast multipoint evaluation of GL curves 
described above can be further optimized using the symmetries~\eqref{E:F-symm}. 
For example, since~\eqref{E:P_n-symm} holds, it suffices to compute only the 
\textit{first half} of the coefficients $g^{(i)}_k$ with respect to $i$ 
(cf.~Remark~\ref{R:Rem-1}). Certainly, similar optimizations are possible for 
shifted power and Legendre bases.
\end{remark}

\section{Numerical experiments}                                \label{S:Tests}

We have tested the multipoint GL curve evaluation strategies 
(cf.~\S\ref{SS:MultiEval}) based on the shifted power 
(Theorem~\ref{T:F^n_i-power}), Legendre (Theorem~\ref{T:F^n_i-Jacobi-0}), 
and symmetric Jacobi (Theorem~\ref{T:F^n_i-Jacobi-1}) representations, 
employing the symmetries~\eqref{E:F-symm} (cf.~Remark~\ref{R:RemOpt}). We have 
compared them with the direct evaluation of the formulas given 
in~\cite{Moon2023} (cf.~the integral representations~\eqref{E:Def-F^n_i}, 
\eqref{E:Def-G^n_i}).

All evaluations were done using \textsf{Python 3.12} with the 
\textsf{NumPy 2.4.4} library. 
We have used the double (\texttt{flot64}) precision. To evaluate the 
integrals~\eqref{E:Def-G^n_i}, we used the \textsf{SciPy 1.17.1} function 
\texttt{quad}. The roots of Legendre polynomials were obtained using the 
\texttt{roots\_legendre} method from the \textsf{SciPy} library. 

The source code which was used to perform the tests is available at 
\url{https://github.com/filipchudy/gauss-legendre-curve-evaluation}.

\subsection{Numerical performance}                           \label{SS:NumPer}

We have tested the numerical performance of the presented methods. For degrees 
up to 100, with the 2D control points sampled from a uniform distribution over 
$[-1, 1]^2$. We have observed that the results using the explicit integral
formulas~\eqref{E:Def-F^n_i}, \eqref{E:Def-G^n_i} form~\cite{Moon2023} and in 
the symmetric Jacobi and the Legendre bases, closely followed each other,
with no significant numerical advantage to either of them. See 
Figure~\ref{F:Figure1}

\begin{figure*}[ht!]
\centering
\includegraphics[angle=0,scale=0.75]{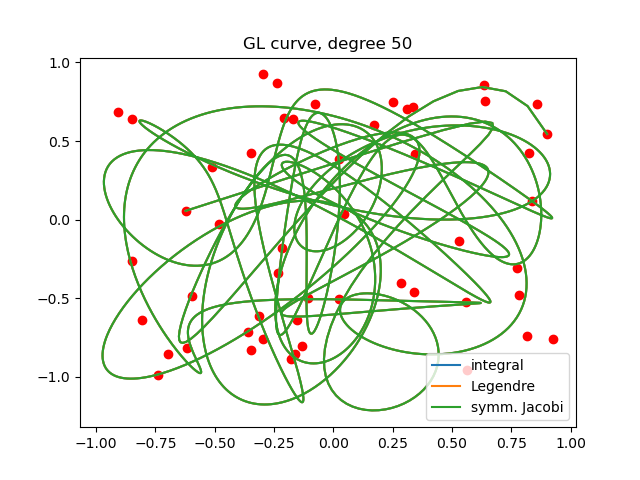}%
\caption{A GL curve of degree 50 with randomly generated control points from 
$[-1,1]^2$.}\label{F:Figure1}
\end{figure*}	

However, the computations in the shifted power basis have started losing their 
precision for $n\gtrsim 20$ (see Figure~\ref{F:Figure2}). This is likely due to 
the fact that the absolute values of the coefficients $a_k$ of Legendre 
polynomials in the shifted power basis (see~\eqref{E:P_n-power}) are very high 
compared to the absolute values of the Legendre polynomials themselves, e.g., 
for $n=25$,
$$
a_{10} = (-1)^{15}  35! / (10!\;15!\;10!\;1024) \approx - 5.86 \cdot 10^{11},
$$
while $|P_{25}(t)|\leq1$ for $t\in[-1, 1]$. Additionally, the coefficients 
$a_k$ have alternating signs, which may result in cancellation errors for 
positive $t$. See also Eq.~\eqref{E:Def-c^i_k} and 
Remark~\ref{R:Rem-power-coeffs}. In contrast, the coefficients of $F^n_i$ in 
the Legendre and symmetric Jacobi bases have small absolute values, which are 
probably less than 1, as suggested by numerical experiments.

\begin{figure*}[ht!]
\centering
\includegraphics[angle=0,scale=0.47]{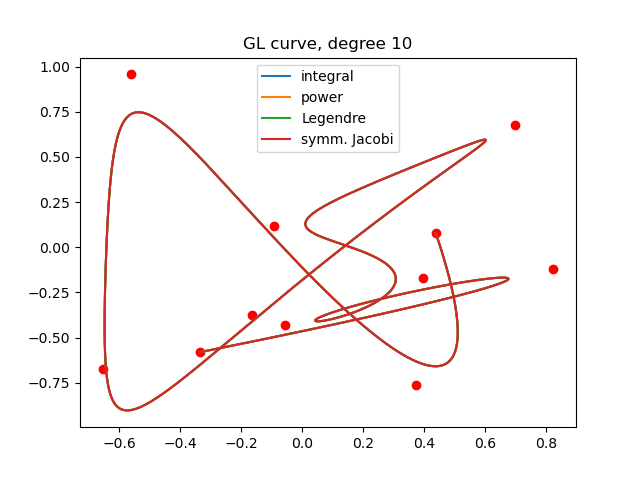}%
\includegraphics[angle=0,scale=0.47]{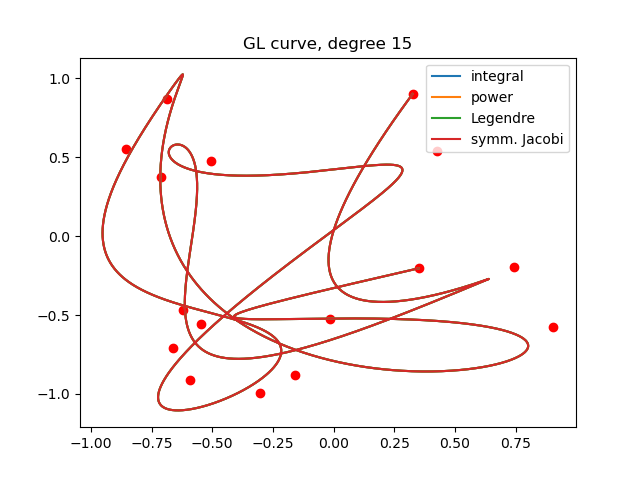}

\includegraphics[angle=0,scale=0.47]{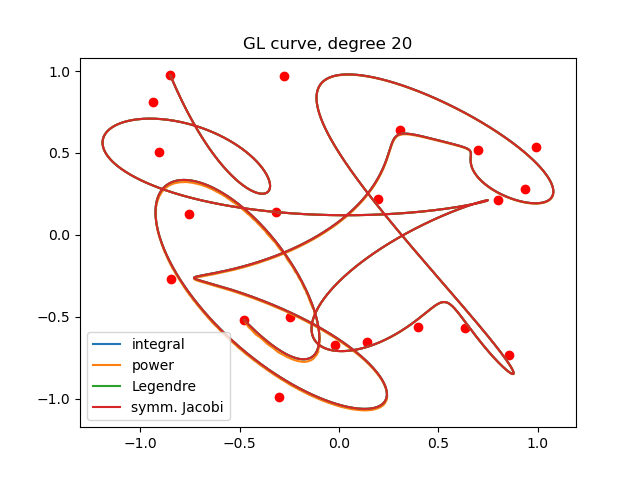}%
\includegraphics[angle=0,scale=0.47]{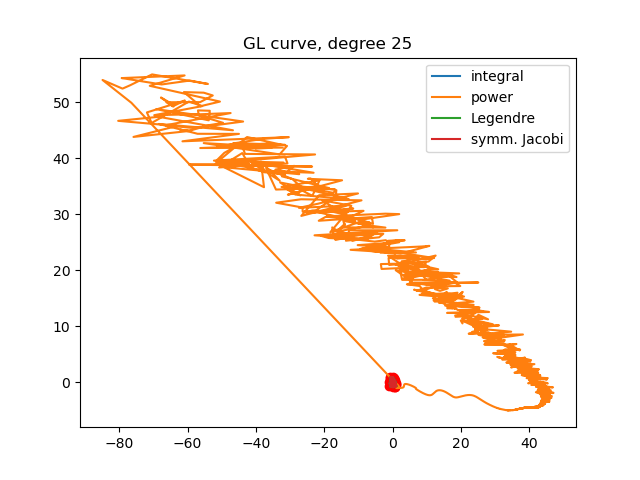}
\caption{Numerical instability in the shifted power representation of a GL 
curves of higher degree.}\label{F:Figure2}
\end{figure*}	

\subsection{Efficiency}                               \label{SS:NumEfficiency}

Despite its poor numerical quality for higher degrees of the GL curve, the 
shifted power basis is significantly faster in terms of evaluation times. 
Example~\ref{E:LowDeg} shows a~comparison of running times for
the four tested methods for the GL curves of degrees up to 15.

\begin{example}\label{E:LowDeg}
The following numerical tests have been conducted. For each tested degree, 
a testing set of 100 curves of this degree has been generated, with the control 
points sampled from the uniform distribution over $[-1, 1]^2$. Then, each curve 
has been preprocessed using the approach given in~\S\ref{SS:MultiEval}, but 
with the optimizations pointed out in Remark~\ref{R:RemOpt}. Each curve has 
been evaluated at $t_i=-1+i/2500$ for $i=1,2,\ldots,4999$. Table~\ref{T:Table1} 
shows the running times of all four methods.
\end{example}

\begin{table*}[ht!]
\begin{center}
\renewcommand{\arraystretch}{1.25}
\begin{tabular}{c|c|c|c|c}
$n$ & symm.~Jacobi & Legendre & Power & Integral \\ \hline
1  & 0.81 & 1.34 & \bf{0.48} & 14.06 \\
2  & 1.44 & 1.89 & \bf{0.66} & 26.42 \\
3  & 1.97 & 2.41 & \bf{0.83} & 38.90 \\
4  & 2.51 & 2.94 & \bf{0.97} & 51.33 \\
5  & 2.98 & 3.49 & \bf{1.16} & 63.88 \\
6  & 3.54 & 3.93 & \bf{1.30} & 76.33 \\
7  & 4.08 & 4.51 & \bf{1.45} & 88.77 \\
8  & 4.53 & 5.07 & \bf{1.59} & 101.38 \\
9  & 5.12 & 5.52 & \bf{1.81} & 114.18 \\
10 & 5.70 & 6.15 & \bf{1.92} & 127.14 \\
11 & 6.15 & 6.69 & \bf{2.15} & 140.42 \\
12 & 6.76 & 7.14 & \bf{2.26} & 152.50 \\
13 & 7.35 & 7.78 & \bf{2.42} & 165.95 \\
14 & 7.67 & 8.23 & \bf{2.52} & 178.95 \\
15 & 8.47 & 8.81 & \bf{2.79} & 192.27 \\
\end{tabular}
\renewcommand{\arraystretch}{1}
\vspace{2ex}
\caption{Times in seconds for Example~\ref{E:LowDeg}.}%
\label{T:Table1}
\vspace{-3ex}
\end{center}
\end{table*}

For higher degrees, due to the bad numerical qualities of the shifted power 
basis, we compare the running times of the Legendre and the symmetric Jacobi 
bases. Example~\ref{E:HighDeg} shows a~comparison of their running times, as 
well as the running times of the direct method based on the integral 
representations~\eqref{E:Def-F^n_i} and~\eqref{E:Def-G^n_i}.

\begin{example}\label{E:HighDeg}
The following numerical tests have been conducted. For the degrees 
$n=20,25,30,25,40,45,50,100$, a testing set of 100 curves of this degree has 
been generated, with the control points sampled from the uniform distribution 
over $[-1, 1]^2$. Then, each curve has been preprocessed using the approach 
presented in~\S\ref{SS:MultiEval}, but with optimizations pointed out in 
Remark~\ref{R:RemOpt}. Each curve has been evaluated at $t_i=-1+i/2500$ for 
$i=1,2,\ldots,4999$. Table~\ref{T:Table2} shows the running times of all three 
methods.
\end{example}

\begin{table*}[ht!]
\begin{center}
\renewcommand{\arraystretch}{1.25}
\begin{tabular}{c|c|c|c}
$n$ & symm.~Jacobi & Legendre & Integral \\ \hline
20 & \bf{10.78} & 11.36 & 264.63 \\
25 & \bf{14.27} & 14.70 & 1297.75 \\
30 & \bf{17.60} & 17.78 & 2147.95 \\
35 & \bf{19.18} & 19.74 & 3171.92 \\
40 & \bf{22.16} & 22.49 & 4554.22 \\
45 & \bf{24.12} & 24.51 & 5919.40 \\
50 & \bf{27.54} & 28.10 & 7450.64 \\
100 & 57.23 & \bf{57.20} & max.~time exceeded \\
\end{tabular}
\renewcommand{\arraystretch}{1}
\vspace{2ex}
\caption{Times in seconds for Example~\ref{E:HighDeg}.}%
\label{T:Table2}
\end{center}
\end{table*}

\section{Conclusions}                                    \label{S:Conclusions}

In this paper, we have given new representations of Gauss--Legendre 
polynomials and their derivatives in the shifted power basis and in some 
orthogonal bases closely related to Jacobi polynomials. We also have proposed 
efficient methods for evaluating Gauss--Legendre curves, including multipoint 
evaluation. The numerical tests demonstrate that the new algorithms are stable 
and computationally efficient, even for high-degree curves when orthogonal 
expansions are used.

Note that we are currently conducting research on the connection between the 
Bernstein and Gauss–Legendre bases, as well as on the construction of the dual 
Gauss–Legendre basis, which may be useful in many applications, for example in 
approximation problems. We intend to publish these results in the near future.

\bibliographystyle{elsart-num-sort}
\biboptions{sort&compress}
\bibliography{GL-evaluation-diff-2}

\end{document}